\documentclass[12pt]{article}

\setlength{\textwidth}{6.5in}
\setlength{\textheight}{9in}
\setlength{\topmargin}{-0.5in}
\setlength{\oddsidemargin}{0.5in}
\setlength{\evensidemargin}{0.5in}

\usepackage{diagbox}
\usepackage{mathtools}
\usepackage{bbm}
\usepackage{latexsym}
\usepackage{epsfig}
\usepackage{amsmath,amsthm,amssymb,enumerate}

\usepackage[a-1b]{pdfx}
\usepackage{hyperref}

\parindent 0in
\parskip 1ex
\usepackage{color}

\addtolength{\textwidth}{1.2in} \addtolength{\oddsidemargin}{-1in}
\addtolength{\evensidemargin}{-0.5in}
\addtolength{\topmargin}{-0.5in} \addtolength{\textheight}{1in}
%\addtolength{\footheight}{-1 in}
\def\reals{\mathbb R}

\newcounter{rot}%\addtocounter{rot}{1}, \therot

\allowdisplaybreaks

% Greek letters
\def\nn{\nonumber}
\def\a{\alpha} \def\b{\beta} \def\d{\delta} \def\D{\Delta}
\def\e{\varepsilon}    \def\g{\gamma}
\def\G{\Gamma}  
\def\z{\zeta} \def\th{\theta}    \def\l{\lambda}
   \def\p{\pi}
   
\def\t{\tau} \def\om{\omega}

\def\cP{{\cal P}}

%Layouts
\newtheorem{theorem}{Theorem}
\newtheorem{lemma}[theorem]{Lemma}

\def\cX{{\mathcal X}}

%%%%%%%%%%%%%%%%%%%%%%math stuff%%%%%%%%%%%%%%%%

\newcommand{\brac}[1]{\left(#1\right)}

\newcommand{\bfrac}[2]{\left(\frac{#1}{#2}\right)}

\def\cE{{\mathcal E}}

\newcommand{\set}[1]{\left\{#1\right\}}

\def\E{\mathbb{E}}
\def\Var{{\mathbb Var}}
\def\Pr{\mathbb{P}}

\def\cF{{\cal F}}
%%%%%%%%%%%%%%%%%%%%%%%%%%%%%%%%%%%%%%%%%%%%%%%%%%%%%%%%%%%%%%%%%%%%%%%%
%specific to this paper
\newcommand{\ignore}[1]{}

\def\cX{{\mathcal X}}

\newcommand{\beq}[2]{\begin{equation}\label{#1}#2\end{equation}}
\newcommand{\mults}[1]{\begin{multline*}#1\end{multline*}}

\def\cG{\mathcal{G}}

\usepackage{tikz}
\usetikzlibrary{decorations}
\usetikzlibrary{decorations.pathreplacing}
\usetikzlibrary{shapes.misc}
\usetikzlibrary{arrows}

\begin{document}
\author{Alan Frieze\thanks{Research supported in part by NSF grant DMS1952285 } and Wesley Pegden\thanks{Research supported in part by NSF grant DMS1363136 }\\Department of Mathematical Sciences\\Carnegie Mellon University\\Pittsburgh PA 15213}

\date{}
\title{Spanners in randomly weighted graphs: independent edge lengths}
\maketitle
\begin{abstract}
Given a connected graph $G=(V,E)$ and a length function $\ell:E\to \reals$ we let $d_{v,w}$ denote the shortest distance between vertex $v$ and vertex $w$. A $t$-spanner is a subset $E'\subseteq E$ such that if $d'_{v,w}$ denotes shortest distances in the subgraph $G'=(V,E')$ then $d'_{v,w}\leq t d_{v,w}$ for all $v,w\in V$. We show that for a large class of graphs with suitable degree and expansion properties with independent exponential mean one edge lengths, there is w.h.p.~a 1-spanner that uses $\approx \frac12n\log n$ edges and that this is best possible.  In particular, our result applies to the random graphs $G_{n,p}$ for $np\gg \log n$.
\end{abstract}
\section{Introduction}
Given a connected graph $G=(V,E)$ and a length function $\ell:E\to \reals$ we let $d_{v,w}$ denote the shortest distance between vertex $v$ and vertex $w$. A $t$-spanner is a subset $E'\subseteq E$ such that if $d'_{v,w}$ denotes shortest distances in the subgraph $G'=(V,E')$ then $d'_{v,w}\leq t d_{v,w}$ for all $v,w\in V$. In general, the closer $t$ is to one, the larger we need $E'$ to be relative to $E$. Spanners have theoretical and practical applications in various network design problems. For a recent survey on this topic see Ahmed et al \cite{Aetal}. Work in this area has in the main been restricted to the analysis of the worst-case properties of spanners. In this note, we assume that edge lengths are random variables and do a probabilistic analysis.

Suppose that $G=([n],E)$ is almost regular in that 
\beq{degs}{
(1-\th)dn\leq \d(G)\leq \D(G)\leq (1+\th)dn
}
where $1\geq d\gg \tfrac{\log\log n}{\log^{1/2}n}$ and $\th=\tfrac{1}{\log^{1/2}n}$.  Here $\d,\D$ refer to minimum and maximum degree respectively. 

We will also assume either that $d>1/2$ or 
\beq{0}{
|E(S,T)|\geq \psi |S|\,|T|\text{ for all }|S|,|T|\geq \th n.
}
Here $\psi=\frac{\om\log\log n}{\log^{1/2}n}\leq d$ where $\om=\om(n)\to\infty$ as $n\to\infty$ and $E(S,T)$ denotes the set of edges of $G$ with one end in $S\subseteq [n]$ and the other end in $T\subseteq [n],\,S\cap T=\emptyset$.

Let $\cG(d)$ denote the set of graphs satisfying the stated conditions, \eqref{degs} and \eqref{0}. We observe that $K_n\in\cG(1)$ and that w.h.p. $G_{n,p}\in\cG(p)$, as long as $np\gg \log n$. The weighted perturbed model of Frieze \cite{FW} where randomly weighted edges are added to a randomly weighted $dn$-regular graph also lies in $\cG(d)$.
 
Suppose that the edges $\set{i,j}$ of $G$ are given independent lengths $\ell_{i,j},1\leq i<j\leq n$ that are distributed as the exponential mean one random variable, denoted by $E(1)$. In general we let $E(\l)$ denote the exponential random variable with mean $1/\l$.

When $G=K_n$, Janson \cite{Jan} proved the following: W.h.p. and in expectation
\beq{j1}{
d_{1,2}\approx \frac{\log n}{n};\quad \max_{j>1}d_{1,j}\approx \frac{2\log n}{n};\quad \max_{i,j}d_{i,j}\approx \frac{3\log n}{n}.
}
Here (i) $A_n\approx B_n$ if $A_n=(1+o(1))B_n$ and (ii) $A_n\gg B_n$ if $A_n/B_n\to\infty$, as $n\to\infty$.

It follows that w.h.p. the length of the longest edge in any shortest path is at most $L=\frac{(3+o(1))\log n}n$. It follows further that w.h.p. if we let $E'$ denote the set of edges of length at most $L$ then this is a 1-spanner of size $O(n\log n)$. We tighten this and extend it to graphs in the class $\cG(d)$.
\begin{theorem}\label{th1}
Let $G\in\cG(d)$ or let $G$ be a $dn$-regular graph with $d>1/2$ where the lengths of edges are independent exponential mean one. The following holds w.h.p.
\begin{enumerate}[(a)]
\item The minimum size of a 1-spanner is asymptotically equal to $\frac{1}{2}n\log n$.
\item If $2\leq \l=O(1)$ then a $\l$-spanner requires at least $\frac{n\log n}{601d\l}$ edges.
\end{enumerate}
\end{theorem}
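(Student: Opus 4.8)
\emph{Part (a): reduction to counting ``essential'' edges.} Since the $\ell_{i,j}$ are continuous, shortest paths are a.s.\ unique, so the minimal $1$-spanner is exactly the union $\cR$ of all shortest paths $P_{v,w}$, $v,w\in[n]$. If an edge $e=\{x,y\}$ lies on some $P_{v,w}$ then, subpaths of shortest paths being shortest and (by uniqueness) themselves unique, $e$ is the unique shortest $x$--$y$ path, i.e.\ $\ell_{x,y}<d_{G-e}(x,y)$; conversely every such ``essential'' edge lies in $\cR$. Hence the minimum $1$-spanner size equals the number of essential edges, and it suffices to prove this is $(1+o(1))\frac12 n\log n$ w.h.p.

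\emph{Part (a), continued.} Fix $v$ and order its incident edges by length, $\ell_{(1)}<\ell_{(2)}<\cdots$, with $\ell_{(k)}$ joining $v$ to $u_k$; then $\{v,u_k\}$ is essential iff no $v$--$u_k$ path of weight $\le\ell_{(k)}$ avoids this edge. The number of $j$-edge $v$--$u_k$ paths is $\approx (dn)^j/n$ (using \eqref{degs} and the quasirandomness coming from \eqref{0}), and a $j$-edge path has weight $\le t$ with probability $\approx t^j/j!$, so the expected number of alternative paths of weight $\le t$ is $\approx e^{dnt}/n$. Since $\ell_{(k)}=(1+o(1))k/(dn)$ w.h.p., this mean is $\approx e^{k}/n$, which crosses $1$ at $k\approx\log n$. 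So for $k\le(1-\e)\log n$ a first-moment bound gives no alternative path w.h.p.\ (essential), while for $k\ge(1+\e)\log n$ a second-moment / Janson bound gives an alternative path w.h.p.\ (not essential). Running the first statement for all but $o(n)$ vertices $v$ and the second for \emph{every} $v$ (the union bound closes because in the second regime $\Pr[\text{no alternative path}]$ is super-polynomially small), every $v$ has $\le(1+o(1))\log n$ essential incident edges and all but $o(n)$ have $\ge(1-o(1))\log n$; summing over $v$ and dividing by $2$ finishes. No separate concentration step is needed, since the per-vertex bounds control the total directly.

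\emph{Part (b).} Let $E'$ be a $\l$-spanner and keep the notation $\ell_{(k)},u_k$ at a fixed $v$. If $\{v,u_k\}\notin E'$, the $G'=(V,E')$-shortest $v$--$u_k$ path has length $\le\l\ell_{(k)}$, is simple, hence leaves $v$ along some $\{v,u_j\}$, $j\ne k$, with its tail a $u_j$--$u_k$ path inside $G-v$; thus $\ell_{(j)}+d_{G-v}(u_j,u_k)\le\l\ell_{(k)}$, so $j\lesssim\l k$ and $d_{G-v}(u_j,u_k)\le\l\ell_{(k)}$. Therefore $\{v,u_k\}\in E'$ \emph{unless} two of the $\approx\l k$ edge-nearest neighbours of $v$ lie within $G-v$-distance $\approx\l k/(dn)$ of each other; since such a neighbour behaves like a uniform random vertex and $\Pr[d_{G-v}(u_j,u_k)\le s]\le(1+o(1))e^{dns}/n$, this bad event has probability $\le(1+o(1))\l k\,e^{\l k}/n$ (times $n^{o(1)}$ from fluctuations of $\ell_{(k)}$). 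Summing over $k\le K:=(1-o(1))\frac{\log n}{\l}$ and over $v\in[n]$, the expected number of pairs $(v,k)$, $k\le K$, for which $\{v,u_k\}$ is not forced into $E'$ is $o(nK)$; hence $\ge(1-o(1))nK$ such pairs are forced, each giving an edge of $E'$, and each edge of $E'$ can be forced from at most its two endpoints, so $|E'|\ge(1-o(1))\frac12 nK=(1-o(1))\frac{n\log n}{2\l}$. Conservative bookkeeping of the constants, kept uniform over the range $d\le1$, is what produces the explicit $\frac{n\log n}{601d\l}$.

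\emph{Main obstacle.} Both parts rest on the same technical input: converting the bare hypotheses \eqref{degs} and \eqref{0} into sharp control of first-passage-percolation balls, $|B_G(v,t)|=e^{(1\pm o(1))dnt}$ while $|B_G(v,t)|=o(n)$, uniformly in $v$ and over a range of $t$, together with the matching upper-tail estimates needed to union-bound over the (up to) $n\log n$, resp.\ $nK$, bad events. For (a) the genuinely delicate point is the \emph{upper} bound on the essential-edge count: it must hold for \emph{every} vertex, so it needs a super-polynomially small failure probability, i.e.\ a second-moment/Janson statement about the number of short alternative paths rather than a first-moment one. For (b) the balls that appear have radius $\approx\l\log n/(dn)$ and are only polynomially large, $n^{\l(1+o(1))}$, and the content is checking that this is still small enough for the union bound to go through.
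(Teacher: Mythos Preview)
Your lower bound for (a) and your argument for (b) are close in spirit to the paper's: force the short edges at each vertex into the spanner by showing alternative routes are too long via a first-moment path count. The paper makes this precise with Lemma~\ref{lem1} (for (a)) and Lemmas~\ref{lem6}--\ref{lem7a} (for (b)), which are essentially what you sketch.

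The real gap is your upper bound for (a). You want to show, via a second-moment/Janson inequality, that for \emph{every} vertex $v$ and every $k\ge(1+\e)\log n$ the $k$th shortest incident edge admits an alternative path, with failure probability small enough to union-bound over all $\approx dn^2$ pairs $(v,k)$. Two problems. First, the hypotheses \eqref{degs} and \eqref{0} do not deliver the path-count lower bounds you need: \eqref{0} only controls edge counts between sets of size $\ge\th n$, so ``$\#\{j\text{-edge }v\text{--}u_k\text{ paths}\}\approx(dn)^j/n$'' is not available from the assumptions, and without it Janson has no input. Second, even granting the path counts, controlling the Janson correlation term $\Delta$ for overlapping weighted paths is a substantial calculation you do not perform; near the threshold $k=(1+\e)\log n$ the mean is only $n^{\e}$, and you would need $\Delta=O(\mu)$ to get anything like the decay you assert.

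The paper sidesteps this entirely and never bounds essential edges per vertex. It sets $E_0=\{e:\ell(e)\le\ell_0\}$ with $\ell_0=(1+\sqrt{\th})\log n/(dn)$, notes $|E_0|\approx\frac12n\log n$ by Chernoff, and then shows only $o(n\log n)$ edges with $\ell(e)>\ell_0$ are themselves shortest paths. For a fixed such $e=\{1,n\}$ it runs Dijkstra from $1$ in $G-n$, proves $d_{1,n_0}$ (with $n_0=n(1-d/2)$) is concentrated near $\log n/(dn)<\ell_0$ using only \eqref{degs} and the crude $\g_i\ge\psi i(n-i)$ from \eqref{0}, and then exploits the \emph{independence} of the $\ge dn/2-1$ edge-lengths $\ell(f_m)$ from this process: the probability they all exceed $\ell(e)-d_{1,n_0}$ is $e^{-\Omega((\log\log n)^2)}$. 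This yields $\Pr(\cE(e))=o(\log n/n)$ directly, and a single Markov step on the total count finishes---no second moment, no Janson, no per-vertex union bound.
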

A companion paper deals with $(1+\e)$-spanners in embeddings of $G_{n,p}$ in $[0,1]^2$ as studied by Frieze and Pegden \cite{FP}. Here we choose $n$ random points $\cX=\set{X_1,X_2,\ldots,X_n}$ in $[0,1]^2$ and connect a pair $X_i,X_j$ with probability $p$ by an edge of length $|X_i-X_j|$.
\section{Proof of Theorem \ref{th1}}
The proof of Theorem \ref{th1} uses a few parameters. We will list some of them here for easy reference:
\begin{align*}
&\th=\frac{1}{\log^{1/2}n}; \qquad k_0=\log n;\qquad k_1=\th n;\qquad \a=1-2\th.\\
&\ell_0=\frac{(1+\sqrt{\th})\log n}{dn};\qquad \ell_1=\frac{5\log n}{dn};\qquad \ell_2=\ell_0-\frac{(\log\log n)^2}{dn};\qquad \ell_3=\frac{\log n}{200\l dn}.
\end{align*} 
We also use the Chernoff bounds for the binomial $B(n,p)$: for $0\leq \e\leq 1$,
\begin{align*}
\Pr(B(n,p)\leq (1-\e)np)&\leq e^{-\e^2np/2}.\\
\Pr(B(n,p)\geq (1+\e)np)&\leq e^{-\e^2np/3}.\\
\Pr(B(n,p)\geq \a np)&\leq\bfrac{e}{\a}^{\a np}.
\end{align*}
It will only be in Section \ref{partb} that we will need to use condition \eqref{0}.
\subsection{Lower bound for part (a)}
We identify sets $X_v$ (defined below) of size $\approx\log n$ such that w.h.p. a 1-spanner must contain $X_v$ for $n-o(n)$ vertices $v$. The sets $X_v$ are the edges from $v$ to its nearest neighbors. If an edge $\set{v,x}$ is missing from a set $S\subseteq E(K_n)$ then a path from $v$ to $x$ must go to a neighbor $y$ of $v$ and then traverse $K_n-v$ to reach $x$. Such a path is likely to have length at least the distance promised by \eqref{j1}, scaled by $d^{-1}$.

We first prove the following: 
\begin{lemma}\label{lem1}
Fix $v,w_1,w_2,\ldots,w_\ell$ for $\ell=O(\log n)$ and let $\a=1-2\th$. Then,
\[
\Pr\brac{\exists 1\leq i\leq \ell:d_{v,w_i}\leq \frac{\a\log n}{dn}}=o(1).
\]
\end{lemma}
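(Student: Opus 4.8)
The plan is a first–moment estimate: a union bound over simple paths, which makes precise the heuristic in the preceding paragraph that a short path from $v$ to some $w_i$ is unlikely. Fix one index $i$ and write $x=\frac{\a\log n}{dn}$. Since the edge lengths are a.s.\ positive, $d_{v,w_i}\le x$ holds if and only if some \emph{simple} path from $v$ to $w_i$ has total length at most $x$. A simple path with $k$ edges has total length distributed as $\Gamma(k,1)$ (a sum of $k$ independent $E(1)$ variables), and I would use the elementary tail bound $\Pr(\Gamma(k,1)\le x)=\int_0^x\frac{t^{k-1}e^{-t}}{(k-1)!}\,dt\le\frac{x^k}{k!}$. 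On the other hand, the number of simple paths with $k$ edges from $v$ to $w_i$ is at most $\Delta^{k-1}$: the first edge leaves $v$ in at most $\Delta$ ways, each subsequent internal vertex has at most $\Delta$ continuations, and the final vertex is forced to equal $w_i$. Here I only need the upper bound $\Delta\le(1+\th)dn$ from \eqref{degs} (or $\Delta=dn$ in the regular case); the expansion hypothesis \eqref{0} plays no role.

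Multiplying the two bounds and summing over $k\ge1$ gives
\[
\Pr\brac{d_{v,w_i}\le x}\ \le\ \sum_{k\ge1}\Delta^{k-1}\,\frac{x^k}{k!}\ =\ \frac{e^{\Delta x}-1}{\Delta}\ \le\ \frac{n^{(1+\th)\a}}{\Delta}.
\]
Since $(1+\th)\a=(1+\th)(1-2\th)=1-\th-2\th^2\le 1-\th$ and $\Delta\ge(1-\th)dn$, for $n$ large the right–hand side is at most $\frac{2n^{-\th}}{d}$; in the $dn$-regular case with $d>1/2$ one has $\Delta x=\a\log n$ exactly, which improves this to $\frac{2n^{-2\th}}{n}$. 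Finally I take a union bound over $i=1,\dots,\ell$. Using $\ell=O(\log n)$, $n^{-\th}=e^{-\log^{1/2}n}$, and $\tfrac1d=o\brac{\log^{1/2}n/\log\log n}$ (which is exactly the content of $d\gg\frac{\log\log n}{\log^{1/2}n}$),
\[
\Pr\brac{\exists\,1\le i\le\ell:\ d_{v,w_i}\le x}\ \le\ \ell\cdot\frac{2n^{-\th}}{d}\ =\ o\brac{\frac{\log^{3/2}n}{\log\log n}\,e^{-\log^{1/2}n}}\ =\ o(1),
\]
which is the claim.

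There is no substantial obstacle here; the estimate is routine once the path–counting and the $\Gamma$-tail bound are combined. The only point that needs care is bookkeeping on the exponent: one must check that $(1+\th)\a$ stays strictly below $1$ with a margin of order $\th$, since it is precisely this margin that produces the factor $n^{-\th}=e^{-\log^{1/2}n}$, and that this decays fast enough to absorb both the $\ell=O(\log n)$ factor from the union bound and the division by $d$, which is only polynomially small in $\log n$.
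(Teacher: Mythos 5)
Your proof is correct and follows essentially the same route as the paper's: a union bound over the at most $((1+\th)dn)^{k-1}$ paths of $k$ edges, combined with the $x^k/k!$ tail bound for the sum of $k$ edge lengths. The only (cosmetic) difference is that you sum the series in closed form as $\brac{e^{\D x}-1}/\D\le n^{(1+\th)\a}/\D$, whereas the paper bounds the terms by $\brac{e^{1+\th}\a\log n/k}^k$ and splits the sum; both yield the decisive factor $n^{-\Theta(\th)}=e^{-\Theta(\log^{1/2}n)}$, which absorbs the $\ell/d$ polylogarithmic losses.
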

\begin{proof}
There are at most $((1+\th)dn)^{k-1}$ paths using $k$ edges that go from vertex $v$ to vertex $w_i,1\leq i\leq \ell$. The random variable $E(1)$ dominates the uniform $[0,1]$ random variable $U_1$. We write this as $E(1)\succ U_1$.  As such we can couple each edge weight with a lower bound given by a copy of $U_1$. The length of one of these $k$-edge paths is then at least the sum of $k$ independent copies of $U_1$. The fraction $x^k/k!$ is an upper bound on the probability that this sum is at most $x$ (tight if $x\leq 1$). Therefore,
\begin{align}
\Pr\brac{\exists 1\leq i\leq \ell:d_{v,w_i}\leq x=\frac{\a\log n}{dn}}&\leq  \ell\sum_{k=1}^{n-1}  ((1+\th)dn)^{k-1}\frac{x^k}{k!} \label{eq1}\\
&\leq \frac{\ell}{dn}\sum_{k=1}^{n-1}\bfrac{e^{1+\th}\a\log n}{k}^k=
\frac{\ell}{dn}\sum_{k=1}^{10\log n}\bfrac{e^{1+\th}\a\log n}{k}^k +O(n^{-10})\nonumber\\
&\leq \frac{10\ell \log n}{dn^{1-\a e^\th}}+o(1)=o(1).\nonumber
\end{align}

\end{proof}
For a vertex $v\in [n]$, let 
\[
A_v=\set{w\neq v:\ell_{v,w}\leq \frac{\log n}{dn}}.
\] 
\begin{lemma}\label{lem2}
W.h.p. $|A_v|\leq 4\log n$ for all $v\in [n]$. 
\end{lemma}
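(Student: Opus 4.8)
\textbf{Proof proposal for Lemma \ref{lem2}.} The plan is a first‑moment estimate for a single vertex, followed by a union bound over all $n$ vertices; I do not expect any genuine obstacle. First I would fix $v$ and observe that $w\in A_v$ exactly when $\set{v,w}\in E(G)$ and the independent length $\ell_{v,w}\sim E(1)$ satisfies $\ell_{v,w}\leq \frac{\log n}{dn}$. Since $dn\gg\log n$, this event has probability $1-e^{-\log n/(dn)}\leq \frac{\log n}{dn}$. The events over distinct neighbours $w$ of $v$ are independent, and $\deg_G(v)\leq (1+\th)dn$ by \eqref{degs}, so $|A_v|$ is stochastically dominated by $Y\sim B\brac{\rdup{(1+\th)dn},\,\frac{\log n}{dn}}$, whose mean satisfies $\E Y\leq (1+\th)\log n+1$ (the $+1$ absorbing the ceiling, which is harmless as $\frac{\log n}{dn}\to 0$).

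Next I would apply the third Chernoff bound listed above with $\a=\frac{4\log n}{\E Y}$. Because $\th\to 0$ and $\frac1{\log n}\to 0$ we have $\a\to 4$, so $\a\geq 3.9$ once $n$ is large; and since $\a\,\E Y=4\log n$ this gives
\[
\Pr\brac{|A_v|\geq 4\log n}\ \leq\ \Pr\brac{Y\geq \a\,\E Y}\ \leq\ \bfrac{e}{\a}^{4\log n}\ \leq\ \bfrac{e}{3.9}^{4\log n}\ \leq\ n^{-1.4},
\]
the last step holding for $n$ large because $4\log(3.9/e)>1.4$.

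Finally, a union bound over the $n$ choices of $v$ yields
\[
\Pr\brac{\exists\, v\in[n]:\ |A_v|>4\log n}\ \leq\ n\cdot n^{-1.4}\ =\ n^{-0.4}\ =\ o(1),
\]
which is the claim. The only point that needs a little care is choosing the threshold large enough — taking $4\log n$ rather than something close to $\E|A_v|\approx\log n$ — so that the Chernoff exponent stays comfortably above $1$ after multiplication by $n$; with $4\log n$ this holds with room to spare, and any constant strictly larger than the solution of $c\log(c/e)=1$ (which is below $4$) would do.
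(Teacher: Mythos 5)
Your proposal is correct and follows essentially the same route as the paper: stochastically dominate $|A_v|$ by a binomial with $(1+\th)dn$ trials and success probability $\frac{\log n}{dn}$ (via $1-e^{-x}\leq x$), apply the third Chernoff bound to get a bound of order $n^{-c}$ with $c>1$, and finish with a union bound over the $n$ vertices. The only difference is cosmetic — you bound $\a\geq 3.9$ numerically where the paper keeps $\brac{e(1+\th)/4}^{4\log n}=o(n^{-1})$ explicit.
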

\begin{proof}
We have, from the Chernoff bounds and $E(1)\succ U_1$ that
\beq{eq2}{
\Pr(|A_v|\geq 4\log n)\leq\Pr\brac{Bin\brac{(1+\th)dn,\frac{\log n}{dn}}\geq 4\log n}\leq \bfrac{e(1+\th)}{4}^{4\log n}=o(n^{-1}).
}
The lemma follows from the union bound, after multiplying the RHS of \eqref{eq2} by $n$.
\end{proof}
For $v\in[n]$, let $\d_v$ be the distance from $v$ to its nearest neighbor. Let 
\[
B=\set{v:\d_v\geq \frac{\log^{1/2}n}{dn}}.
\]
\begin{lemma}\label{lem3}
$|B|\leq ne^{-\log^{1/3}n}$ w.h.p.
\end{lemma}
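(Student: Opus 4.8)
The plan is to first note that $\d_v$ is nothing more than the minimum length of an edge incident to $v$: every path leaving $v$ has length at least that of its first edge, and the first edge can be any edge at $v$, so $\d_v=\min_{w\sim v}\ell_{v,w}$. Consequently $v\in B$ holds exactly when \emph{every} edge incident to $v$ has length at least $t:=\frac{\log^{1/2}n}{dn}$. The number of such edges is $\deg(v)\geq(1-\th)dn$ by \eqref{degs}, and this is the direction of the bound we want.

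Next I would use that the edge lengths are mutually independent and that $\Pr(E(1)\geq t)=e^{-t}$. This gives
\[
\Pr(v\in B)\;\leq\;\prod_{w\sim v}\Pr(\ell_{v,w}\geq t)\;=\;e^{-t\deg(v)}\;\leq\;e^{-t(1-\th)dn}\;=\;e^{-(1-\th)\log^{1/2}n}\;\leq\;e^{\,1-\log^{1/2}n},
\]
where the last step uses $\th\log^{1/2}n=1$. Summing over the $n$ vertices, $\E|B|\leq n e^{\,1-\log^{1/2}n}$.

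Finally, Markov's inequality yields
\[
\Pr\!\brac{|B|>ne^{-\log^{1/3}n}}\;\leq\;\frac{\E|B|}{n e^{-\log^{1/3}n}}\;\leq\;e^{\,1+\log^{1/3}n-\log^{1/2}n}\;=\;o(1),
\]
since $\log^{1/2}n\gg\log^{1/3}n$, which is the claimed statement. There is essentially no obstacle here: the only points requiring a moment's care are the identification of $\d_v$ with the smallest incident edge length, and keeping track that only the lower bound $\deg(v)\geq(1-\th)dn$ is needed (the upper bound in \eqref{degs} plays no role). If a stronger tail were ever wanted one could replace Markov by a Chernoff bound on $|B|$, but for this lemma the first-moment argument already suffices.
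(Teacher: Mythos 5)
Your proof is correct and follows exactly the paper's argument: bound $\Pr(v\in B)$ by $e^{-t\deg(v)}\leq e^{-(1-\th)\log^{1/2}n}$ using independence of the incident edge lengths and the degree lower bound, then conclude via the first moment and Markov's inequality. The only difference is that you spell out the identification $\d_v=\min_{w\sim v}\ell_{v,w}$ and the final Markov computation, which the paper leaves implicit.
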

\begin{proof}
We have
\[
\E(|B|)\leq n\brac{\exp\set{-\frac{\log^{1/2}n}{dn}}}^{(1-\th)dn}= ne^{-(1-\th)\log^{1/2}n}.
\]
The lemma follows from the Markov inequality.
\end{proof}
Let 
\[
X_v=\set{e=\set{v,x}:\ell(e)\leq\d_v+\frac{\a\log n}{dn}}.
\]
\begin{lemma}\label{lem4}
Let $S\subseteq E(K_n)$ define a 1-spanner. Then w.h.p. $S\supseteq X_v$ for all but $o(n)$ vertices $v$.
\end{lemma}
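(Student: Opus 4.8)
The plan is to show that the only obstruction to an edge $\{v,x\}\in X_v$ lying in a $1$-spanner $S$ is the existence of a short detour around $v$, and that such detours occur for only $o(n)$ vertices. So suppose $e=\{v,x\}\in X_v$ but $e\notin S$, and let $P$ be a shortest $v$--$x$ path in $(V,S)$ (it exists since a $1$-spanner of a connected graph is connected); as $e\notin S$, $P$ leaves $v$ along some edge $\{v,y\}$ with $y\neq x$. Since $(V,S)$ is a subgraph of $G$, the $1$-spanner property gives $\ell(P)\le d_{v,x}\le\ell(e)\le\d_v+\frac{\a\log n}{dn}$, while removing the first edge of $P$ leaves a $y$--$x$ path avoiding $v$, so $\ell(P)=\ell_{v,y}+\ell(P\setminus\{v,y\})\ge\d_v+\dist_{G-v}(y,x)$, using $\ell_{v,y}\ge\d_v$. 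Hence $\dist_{G-v}(y,x)\le\frac{\a\log n}{dn}$.

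The key observation is that $x$ and $y$ then both lie in $A_v$ as soon as $v\notin B$: then $\d_v<\frac{\log^{1/2}n}{dn}<\frac{(1-\a)\log n}{dn}$, so $\ell(e)\le\d_v+\frac{\a\log n}{dn}<\frac{\log n}{dn}$, which puts $x\in A_v$, and likewise $\ell_{v,y}\le\ell(P)<\frac{\log n}{dn}$ puts $y\in A_v$. Let $\cE_v$ be the event that some two distinct members of $A_v$ are at distance at most $\frac{\a\log n}{dn}$ in $G-v$. We have shown that if $v\notin B$ and $\cE_v$ fails then $X_v\subseteq S$ for \emph{every} $1$-spanner $S$, so it suffices to prove $|B\cup\{v:\cE_v\}|=o(n)$ w.h.p.

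By Lemma \ref{lem3} we already have $|B|=o(n)$ w.h.p., so it remains to bound $|\{v:\cE_v\}|$, which I will do via $\E|\{v:\cE_v\}|$ and Markov's inequality. Fix $v$ and condition on the lengths of the edges at $v$: this reveals $A_v$ but leaves the lengths in $G-v$, hence all distances in $G-v$, independent. On the event $|A_v|\le 4\log n$, which holds for all $v$ w.h.p.\ by Lemma \ref{lem2}, apply Lemma \ref{lem1} inside $G-v$ --- which still has maximum degree at most $(1+\th)dn$, so the lemma applies verbatim --- taking each $y\in A_v$ in turn as the source and $A_v\setminus\{y\}$ as the $\ell=O(\log n)$ targets. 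A union bound over the at most $4\log n$ choices of $y$ gives $\Pr(\cE_v)=o(1)$ uniformly in $v$, since the estimate in Lemma \ref{lem1} (essentially $n^{-(1-\a e^\th)}$ up to polylogarithmic factors) has ample room to absorb the extra $\log n$. Hence $\E|\{v:\cE_v\}|=o(n)$ and the lemma follows.

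The step I expect to require the most care is this reduction to a detour with \emph{both} endpoints in $A_v$: a priori the first edge $\{v,y\}$ of $P$ could go to any of the $\Theta(dn)$ neighbours of $v$, and it is only the membership $e\in X_v$ --- which forces $\ell(P)$, and so $\ell_{v,y}$, below $\frac{\log n}{dn}$ --- that cuts the number of candidate pairs $(x,y)$ down to $O(\log^2 n)$, which is what makes Lemma \ref{lem1} applicable. The remaining points (that Lemma \ref{lem1} transfers to $G-v$, and that conditioning on the star at $v$ decouples $A_v$ from the distances in $G-v$) are routine.
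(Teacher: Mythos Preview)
Your proof is correct and follows essentially the same approach as the paper's: reduce the failure of $X_v\subseteq S$ to the existence of a short path in $G-v$ between two close neighbours of $v$, then invoke Lemma~\ref{lem1} and Markov's inequality to show this happens for only $o(n)$ vertices. You are in fact more explicit than the paper about the reduction step, namely that the first edge $\{v,y\}$ of the detour must also land in $A_v$ (since $\ell_{v,y}\le\ell(P)<\tfrac{\log n}{dn}$ for $v\notin B$), which is precisely what cuts the number of targets down to $O(\log n)$ and makes Lemma~\ref{lem1} applicable; the paper's phrase ``the remaining neighbors of $v$ in $K_n$'' is a slip for this same set.
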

\begin{proof}
  Let $G_S=([n],S)$ and suppose that $v\notin B$. Then
  \begin{equation}
    \d_v+\frac{\a\log n}{dn}< \frac{\log^{1/2}n}{dn}+\frac{\a\log n}{dn}<\frac{\log n}{dn}
  \end{equation}
  and so $X_v\subseteq \{v\}\times A_v$ and in particular $|X_v|\leq 4\log n$ w.h.p. by Lemma \ref{lem2}.

  If $G_S$ does not contain an edge $e=\set{v,x}\in X_v$, then
  the $G_S$-distance from $v$ to $x$ is then w.h.p. at least 
\beq{eq3}{
\d_v+\frac{\a\log n}{dn}>d_{v,x}.
}
To obtain \eqref{eq3} we have used Lemma \ref{lem1} applied to $K_n-v$ with $x$ replacing $v$ and $w_1,w_2,\ldots,w_\ell$ being the remaining neighbors of $v$ in $K_n$.

So, if 
\[
C=\set{v\notin B:\exists \text{1-spanner }S\not\supseteq X_v},
\]
then $\E(|C|)=o(n)$.

Any 1-spanner must contain $X_v,\,v\in [n]\setminus (B\cup C)$ and the lemma follows from the Markov inequality.
\end{proof}
Now $|X_v|$ dominates $Bin\brac{(1-\th)dn,1-\exp\set{-\frac{\a\log n}{dn}}}$ and so by the Chernoff bounds
\[
\Pr\brac{|X_v|\leq (1-\e)\a\log n+O\bfrac{\log^2n}{n}}\leq e^{-\e^2\a\log n/(2+o(1))}=o(1)\text{ for }\e=\log^{-1/3}n.
\]
Applying Lemma \ref{lem4} we see that w.h.p. a 1-spanner contains at least $\frac{1-o(1)}{2}n\log n$ edges. The factor 2 comes from the fact that $\set{v,w}$ can be in $X_v\cap X_w$. (In this case the edge $\set{v,w}$ contributes twice to the sum of the $|A|_v|$'s.)  Note that we do not need \eqref{0} to prove the lower bound.
\subsection{Upper bound for part (a)}\label{partb}
Let $\ell_0=\frac{(1+\sqrt{\th})\log n}{dn}$ and $\ell_1=\frac{5\log n}{dn}$ and $E_0=\set{e:\ell(e)\leq \ell_0}$. Now $|E(G)|\in (1\pm\th)dn^2/2$ and so the Chernoff bounds imply that w.h.p. $|E_0|\approx \frac12n\log n$ and our task is to show that adding $o(n\log n)$ edges to $E_0$ gives us  a 1-spanner w.h.p. We will do this by showing that w.h.p. there are only $o(n\log n)$ edges $e$ with $\ell(e)>\ell_0$ that are the shortest path between their endpoints.  Adding these $o(n\log n)$ edges to $E_0$ creates a 1-spanner, since every edge on a shortest path in a graph is itself a shortest path between its endpoints.

Janson \cite{Jan} analysed the performance of Dijkstra's \cite{Dijk} algorithm on the complete graph $K_n$ with exponential edge-weights; we will adapt his argument to our setting on a graph $G$ satisfying conditions \eqref{degs} and \eqref{0}.

In particular, we analyze Dijkstra's algorithm for shortest paths from vertex 1 where edges have exponential weights. %We use the notion that vertex $v$ is {\em infected} to say that the Dijkstra algorithm has found a shortest path from 1 to $v$.
%\begin{itemize}
%\item At time $t=0$, vertex 1 is infected.
%\item At every time point, there are exponential rate-1 clocks  on every edge which joins an uninfected vertex to an infected one.  When any clock rings, the uninfected vertex becomes infected.
%\item The distance $d_{1v}$ from $1$ to any vertex $v$ is defined by the time at which it is first connected to vertex 1.
%\end{itemize}
%As in Janson \cite{Jan}, the distribution of the distances  $d_{1,v}$ produced in this way is identical to the distribution of the lengths of shortest paths from $1$ to $v$.   
%In particular, 
Recall that after $i$ steps of the algorithm we have a tree $T_i$ and a set of values $d_{v},v\in [n]$ such that for $u\in T_i$, $d_u$ is the length of the shortest path from 1 to $u$. For $v\notin T_i$, $d_v$ is the length of the shortest path from 1 to $v$ that follows a path from 1 to $u\in T_i$ and then uses the edge $\set{u,v}$. Let $\d_i=\max\set{v\in T_i:d_v}$. 

The constraints on the length $l(u,v)$ of the edge $\{u,v\}$ for $u\in T_i,v\notin T_i$ are that $d_u+l(u,v)\geq \d_i$ or equivalently that $l(u,v)\geq \d_i-d_u$.  Fixing $T_i$ and the lengths of edges within $T_i$ or its complement, every set of lengths $\{l(u,v)\}_{\substack{u\in T_i\\ v\notin T_i}}$ satisfying these constraints would give the same history of the algorithm to this point. Due to the memoryless property of the exponential distribution we then have that $l(u,v)=\d_i-d_u+E_{u,v}$ where $E_{u,v}$ is a mean-1 exponential, independent of all other $E(u',v')$.

Thus the Dijkstra algorithm is equivalent in distribution to the following discrete-time process:
\begin{itemize}
\item Set $v_1=1$, $T_1=\set{1}$.
\item Having defined $T_{i}$, associate a mean-1 exponential $E_{u,v}$ to each edge $\set{u,v}\in E(T_{i},\bar T_i)$ that is independent of the process to this point.  Define $e_{i+1}$ to be the edge $\set{u,v}\in E(T_i,\bar T_i)$ minimizing $\d_i+E_{u,v}$, and define $v_{i+1}$ to be the vertex for which $e_{i+1}=\set{v_j,v_{i+1}}$ for some $v_j\in T_i$.  Finally define $d_{v_{i+1}}$ by $\d_i+E_{v_i,v_j}$.
\end{itemize}

Finally, note that, as the minimum of $r$ rate-1 exponentials is an exponential of rate $r$, this is equivalent in distribution to the following process:

\begin{itemize}
\item Set $v_1=1$, $T_1=\set{1}$.
\item Having defined $v_i,T_{i},$ define a vertex $v_{i+1}$ by choosing an edge $e_{i+1}=\{v_j,v_{i+1}\}$ ($j\leq i$) uniformly at random from $E(T_i,\bar T_i)$, set $T_{i+1}=T_i\cup \set{v_{i+1}}$, and define $d_{1,v_{i+1}}=d_{1,v_i}+E_{i}^{\gamma_{i}}$ where $E_{i}^{\gamma_{i}}$ is an (independent) exponential random variable of rate $\gamma_{i}=E(T_i,\bar T_i)$.
\end{itemize}
It follows that 
\[
\E(d_{1,m})=S_m:=\sum_{i=1}^{m-1}\E\bfrac{1}{\g_i}\quad\text{and}\quad \Var(d_{1,m})=\sum_{i=1}^{m-1}\E\bfrac{1}{\g_i^2}.
\]
Observe that we have
\[
(1-\th)i(dn-i)\leq\g_i\leq (1+\th)idn\quad\text{w.h.p.}
\]
and so for $1\leq i\leq \th n$ we have
\[
\g_i=idn(1+\z_i)\text{ where }|\z_i|=O(\th),\quad\text{w.h.p.}
\]
Also, we have
\[
\g_i=(n-i)dn(1+\z_i)\text{ where }|\z_i|=O(\th)\quad\text{w.h.p.}
\]
for $n-\th n\leq i\leq n$.

It follows that
\beq{Sn2}{
S_{\th n}=(1+O(\th))\sum_{i=1}^{\th n}\frac{1}{dni}=\frac{\log n}{dn}+ O\bfrac{\log^{1/2}n}{n}\quad\text{w.h.p.}
}
\begin{lemma}\label{maxdij}
W.h.p. $\max_{i,j}d_{i,j}\leq \ell_1=\frac{5\log n}{dn}$.
\end{lemma}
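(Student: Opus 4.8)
Following Janson's analysis of Dijkstra's algorithm, I fix a source vertex $v$ (say $v=1$), run the discrete process described above, and bound the distance $d_{1,v_n}=\sum_{i=1}^{n-1}E_i^{\gamma_i}$ from $v$ to the last vertex it reaches; since the increments $E_i^{\gamma_i}$ are positive this is exactly $\max_j d_{1,j}$. I then take a union bound over the $n$ choices of $v$, so the per-source estimate must fail with probability $o(n^{-1})$ and Chebyshev will not suffice --- I use exponential moments. The structural point is that, conditional on the nested sets $T_1\subset T_2\subset\cdots$ generated by the process, $d_{1,v_n}$ is a sum of independent exponentials of rates $\gamma_i=|E(T_i,\bar T_i)|$, and $\gamma_i$ is controlled by $|T_i|=i$ alone: writing $\gamma_i=\sum_{u\in T_i}(\deg(u)-\deg_{T_i}(u))$ and using \eqref{degs} one gets, for \emph{all} realizations, $\gamma_i=idn(1+o(1))$ when $i\le\th n$ and $\gamma_i=(n-i)dn(1+o(1))$ when $i\ge n-\th n$, while for $\th n\le i\le n-\th n$ both sides of the cut have size $\ge\th n$, so $\gamma_i\ge\psi\,i(n-i)$ by \eqref{0}; in the $d>\tfrac12$ case (where \eqref{0} is not assumed) near-regularity gives the analogous bound $\gamma_i=\Omega\!\big((d-\tfrac12)\min(i,n-i)\,n\big)\ge\Omega\!\big((d-\tfrac12)i(n-i)\big)$, which plays the same role. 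Consequently the moment bounds below hold conditionally on the process, hence unconditionally.

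Split $d_{1,v_n}=X_1+X_2+X_3$ according to $i\le\th n$, $\th n<i<n-\th n$, $i\ge n-\th n$. For $X_1$, take $s$ to be a fixed constant multiple $c$ of $dn$ with $c<1$, so that $s/\gamma_i\le c'/i$ for a constant $c'\in(c,1)$ and all large $n$; then
\[
\E\,e^{sX_1}=\prod_{i\le\th n}\frac{\gamma_i}{\gamma_i-s}\le\prod_{i\le\th n}\Big(1-\frac{c'}{i}\Big)^{-1}=\frac{\Gamma(\th n+1)\,\Gamma(1-c')}{\Gamma(\th n+1-c')}\sim\Gamma(1-c')(\th n)^{c'}\le\Gamma(1-c')\,n^{c'},
\]
using $\Gamma(m+1)/\Gamma(m+1-c')\sim m^{c'}$. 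The exponential Markov bound $\Pr(X_1>t)\le e^{-st}\E e^{sX_1}$ with $t=\theta\frac{\log n}{dn}$ then gives $\Pr\!\big(X_1>\theta\tfrac{\log n}{dn}\big)\le\Gamma(1-c')\,n^{c'-c\theta}$, which is $o(n^{-1})$ provided $c\theta-c'>1$; since $c'$ may be taken arbitrarily close to $c$, it suffices that $c(\theta-1)>1$, so $c=0.8,\ \theta=2.4$ works. The rate estimates for $X_3$ are the mirror image $i\leftrightarrow n-i$ of those for $X_1$, so the same bound holds for $X_3$ with $\theta=2.4$. For $X_2$, using $\gamma_i\ge\psi\,i(n-i)$ and $\sum_{\th n<i<n-\th n}\frac1{i(n-i)}=\frac2n\sum_{\th n<i<n-\th n}\frac1i=O\!\big(\tfrac{\log\log n}{n}\big)$ we get $\E X_2\le\frac1\psi O\!\big(\tfrac{\log\log n}{n}\big)=O\!\big(\tfrac{\log^{1/2}n}{\om n}\big)=o\!\big(\tfrac{\log n}{dn}\big)$; taking the much larger value $s=\psi n^2/\log n$ (for which $s/\gamma_i\le 2/(\th\log n)=o(1)$ uniformly on the range) gives $\E e^{sX_2}\le\exp\!\big(2s\sum 1/\gamma_i\big)=e^{O(n\log\log n/\log n)}$, and hence $\Pr\!\big(X_2>0.1\tfrac{\log n}{dn}\big)\le e^{-0.1\,s\log n/(dn)}\E e^{sX_2}=e^{-\Omega(\psi n/d)}=o(n^{-1})$, because $st=0.1\,\psi n/d$ exceeds the exponent $O(n\log\log n/\log n)$ of $\E e^{sX_2}$ by a factor $\Omega(\om\log^{1/2}n)\to\infty$, and $\psi n/d\gg\log n$.

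Combining the three estimates, with probability $1-o(n^{-1})$ we have $d_{1,v_n}=X_1+X_2+X_3\le(2.4+0.1+2.4)\tfrac{\log n}{dn}<\tfrac{5\log n}{dn}=\ell_1$. The argument used only \eqref{degs}, the hypothesis \eqref{0} (or $d>\tfrac12$), and the memoryless property, none of which distinguish the source, so the same conclusion holds with probability $1-o(n^{-1})$ for $\max_j d_{v,j}$ from every fixed $v$; a union bound over the $n$ sources yields $\max_{i,j}d_{i,j}\le\ell_1$ w.h.p. The main obstacle is exactly the need to push the per-source failure probability down to $o(n^{-1})$: this forces the exponential-moment estimates and, with them, the bookkeeping that the two ``head'' contributions $X_1$ and $X_3$ (accrued near the source and near the last-reached vertex, where the relevant cuts $\gamma_i$ are only of order $dn$) each stay below roughly $2.4\tfrac{\log n}{dn}$ in the tail, while the bulk $X_2$ is rendered negligible by the expansion hypothesis \eqref{0}.
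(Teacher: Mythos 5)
Your proof is correct, but it follows a genuinely different route from the paper's. You run a single-source Dijkstra process all the way to the last vertex, write $\max_j d_{1,j}=\sum_{i=1}^{n-1}E_i^{\gamma_i}$, and split the sum into head ($i\le\th n$), bulk, and tail ($i\ge n-\th n$), controlling the head and tail by the exponential-moment/Gamma-function computation and killing the bulk via the deterministic cut lower bound $\gamma_i\ge\psi\,i(n-i)$ coming from \eqref{0} (or from near-regularity when $d>\tfrac12$). The paper instead only runs the process for $k_1=\th n$ steps from \emph{each} of the two endpoints, shows each ball has radius $\le\frac{(2+o(1))\log n}{dn}$ by the same kind of MGF bound, and then stitches the two balls together with a single edge of length $\le 1/n$ (guaranteed w.h.p. by \eqref{0} applied to the two disjoint $\th n$-sets), or with a short $2$-path through the large common neighborhood when $d>\tfrac12$. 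The paper's two-ball argument avoids any analysis of the bulk of the process and yields the slightly sharper constant $4+o(1)$ versus your $4.9$ (both comfortably below $5$); your argument is more self-contained in that it never needs the ball-meeting step, and your treatment of the $d>\tfrac12$ case is arguably cleaner, since you simply observe that near-regularity with $d>\tfrac12$ supplies a cut bound $\gamma_i=\Omega\brac{(d-\tfrac12)i(n-i)}$ that substitutes for \eqref{0}, rather than constructing explicit $2$-paths. All the individual estimates check out: the conditional MGF bounds are legitimate because your lower bounds on $\gamma_i$ are deterministic functions of $i$ alone, the Gamma-ratio asymptotics give $n^{c'-c\vartheta}=o(n^{-1})$ for $c=0.8,\vartheta=2.4$, and the bulk exponent $0.1\psi n/d$ indeed dominates $O(n\log\log n/\log n)$ and exceeds $\log n$. (One cosmetic point: you reuse the symbol $\theta$ both for the paper's $\th=\log^{-1/2}n$ and for your constant $2.4$; rename the latter to avoid confusion.)
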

\begin{proof}
Following \cite{Jan}, let $k_1=\th n$ and $Y_i=E_{i}^{\gamma_{i}},1\leq i<n$ so that $Z_1=d_{1,k_1}=Y_1+Y_2+\cdots+Y_{k_1-1}$. For $t<1-\frac{1+o(1)}{dn}$ we have implies that w.h.p. for $m=k_1-1$,
\begin{align}
\E(e^{tdnZ_1})&=\E\brac{\prod_{i=1}^{m}e^{tdnY_i}}= \sum_{x}\E\brac{\prod_{i=1}^{m}e^{tdnY_i}\mid\g_m=x}\Pr(\g_m=x)\nn\\ 
 &=\E\brac{\prod_{i=1}^{m-1}e^{tdnY_i}}\sum_{x} \E(e^{tdY_m}\mid \g_m=x)\Pr(\g_m=x)\label{middle}\\
&= \E\brac{\prod_{i=1}^{m-1}e^{tdnY_i}}\sum_{x} \frac{x}{x-tdn}\Pr(\g_m=x)= \E\brac{\prod_{i=1}^{m-1}e^{tdnY_i}}\brac{1-\frac{(1+o(1))t}{i}}^{-1}.\nn% =\cdots=\prod_{i=1}^{m}\brac{1-\frac{(1+o(1))t}{i}}^{-1}.
\end{align}
Here the term in \eqref{middle} stems from the fact that given $\g_m$, $Y_m$ is independent of $Y_1,Y_2,\ldots,Y_{m-1}$.

Then for any $\b>0$ we have
\mults{
\Pr\brac{Z_1\geq \frac{\b\log n}{dn}}\leq \E(e^{tdnZ_1-t\b\log n}) \leq e^{-t\b\log n}\prod_{i=1}^{k_1-1}\brac{1-\frac{(1+o(1))t}{i}}^{-1}\\
=e^{-t\b\log n}\exp\set{\sum_{i=1}^{k_1-1}\brac{\frac{(1+o(1))t}{i}+O\bfrac{1}{i^2}}} = \exp\set{\brac{1+o(1)-\b}t\log n}. 
}
It follows, on taking $\b=2+o(1)$ that w.h.p. 
\[
d_{j,k_1}\leq \frac{(2+o(1))\log n}{dn}\text{ for all }j\in [n]. 
\]
Letting $\widehat{T}_{k_1}$ be the set corresponding to $T_{k_1}$ when we execute Dijkstra's algorithm starting at vertex 2. First consider the case where $d\leq 1/2$ and \eqref{0} holds. Then, using \eqref{0},  we have that either $T_{k_1}\cap \widehat{T}_{k_1}\neq \emptyset$ or, 
\beq{SP2}{
\Pr\brac{\not\exists e\in T_{k_1}:\widehat{T}_{k_1}:X(e)\leq \frac{1}{n}}\leq \exp\set{-\frac{\psi\th^2n^2}{n}}=o(n^{-2})
}
This shows that we fail to find a path of length $\leq \frac{(4+o(1))\log n}{dn}+\frac{1}{n}$ between a fixed pair of vertices with probability $o(n^2)$.  In particular, taking a union bound over all pairs of vertices, we obtain that w.h.p. $\max_{i,j}d_{i,j}\leq \frac{(4+o(1))\log n}{dn}+\frac{1}{n}$. 

If $G$ has $\delta(G)\geq (1-\t)dn$ with $d=1/2+\e$, $\e>0$ constant, then any pair of vertices has at least $(2\e -2\th)n$ common neighbors. We pair up the vertices of $T_{k_1}$ ${T_{k_2}}$ and bound the probabibility that we cannot find a path of length 2 whose endpoints consist of one of our pairs, and which uses only edges of length at most $\frac{\log n}{n\log\log n}$, as
\[
\left(e^{-(\frac{\log n}{n\log \log n})^2}\right)^{-\th n(2\e n-2\th n)}=o(n^{-2}).
\]
Again we are done by a union bound over possible pairs.
\end{proof}
We now consider the probability that a fixed edge $e$ satisfies that $\ell(e)>\ell_0$ and that $e$ is a shortest path from 1 to $n$.
\begin{lemma}\label{lem5}
Let $\cE(e)$ denote the event that $\ell(e)>\ell_0$ and $e$ is a shortest path from 1 to $n$.
\[
\Pr\brac{\cE\ \bigg| \max_{j}d_{1,j}\leq \ell_1}=o\bfrac{\log n}{n}.
\]
\end{lemma}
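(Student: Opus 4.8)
We may assume $e=\{1,n\}\in E(G)$, and write $d'$ for the length of a shortest $1$--$n$ path in $G-e$. First observe that $G$ has no bridge: a bridge $\{1,n\}$ would split $V$ into two parts each of size at least $\d(G)-1\geq(1-\th)dn$, which is impossible when $d>1/2$ and contradicts \eqref{0} when $d\leq 1/2$ (both parts then have size $\geq\th n$, forcing $\geq\psi\th^2n^2\gg 1$ crossing edges, while only the edge $e$ crosses). Hence $d'<\infty$. A shortest $1$--$n$ path in $G$ either equals $e$ (length $\ell(e)$) or avoids $e$ (length $d'$), so $e$ is a shortest $1$--$n$ path exactly when $\ell(e)\leq d'$, i.e.\ $\cE=\cE(e)=\{\ell_0<\ell(e)\leq d'\}$. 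Since $\ell(e)\sim E(1)$ is independent of the lengths in $G-e$ and hence of $d'$,
\[
\Pr(\cE)=\E\big[(e^{-\ell_0}-e^{-d'})^+\big]\leq\E\big[(d'-\ell_0)^+\big],
\]
the last step because $x\mapsto e^{-x}$ is $1$-Lipschitz on $[0,\infty)$. As $\Pr(\max_jd_{1,j}\leq\ell_1)=1-o(1)$ by Lemma \ref{maxdij}, the conditioning in the statement costs only a factor $(1-o(1))^{-1}$, so it suffices to prove $\E[(d'-\ell_0)^+]=o(\log n/n)$.

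I run the discrete-time process equivalent to Dijkstra's algorithm from vertex $1$ in $G-e$: at step $i$ the radius grows by an independent $Y_i=E_i^{\gamma_i}$ with $\gamma_i=|E(T_i,\bar T_i)|$ computed in $G-e$. Since $G-e$ still satisfies \eqref{degs} and \eqref{0} up to an additive $1$ (negligible as $\psi\th^2n^2\to\infty$), for \emph{every} realization $\gamma_i=idn(1+O(\th))$ for $i\leq k_1=\th n$; $\gamma_i\geq(1-o(1))\psi i(n-i)$ for $k_1<i\leq n-k_1$; and $\gamma_i\geq(n-i)dn/3$ for $i>n-k_1$ (each vertex of $\bar T_i$ has $\geq(1-\th)dn-|\bar T_i|$ neighbours in $T_i$). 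Let $\tau$ be the step at which $n$ enters the tree, and set $W=\sum_{i<n-k_1}Y_i$ (the radius once $T$ has $n-k_1$ vertices) and $R=\sum_{i\geq n-k_1}Y_i\mathbf 1\{n\notin T_i\}$. Then $d'\leq W+R$ for every realization (with equality once $\tau>n-k_1$, and $d'\leq W$, $R=0$ otherwise), so $\E[(d'-\ell_0)^+]\leq\E[(W-\ell_0)^+]+\E[R]$, and I bound the two terms separately.

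For $W$ I use Janson's moment generating function computation as in Lemma \ref{maxdij}. With $t=1-2\th$ (so $tdn<\gamma_i$ for all $i$, since $\gamma_1=\deg_{G-e}(1)\geq(1-\th)dn-1$), $\E[e^{tdnW}]=\prod_{i<n-k_1}\gamma_i/(\gamma_i-tdn)$; the factors with $i\leq k_1$ contribute $\exp\{t(1+O(\th))\log k_1+O(\log\log n)\}\leq\exp\{t\log n+O(\log^{1/2}n)\}$ (the $O(\log\log n)$ is the $i=1$ factor, of size $\approx1/\th$), while those with $k_1<i<n-k_1$ contribute $\exp\{o(\log^{3/4}n)\}$ since $\sum 1/(i(n-i))=O((\log\log n)/n)$ and $(d/\psi)\log\log n=d\log^{1/2}n/\omega=o(\log^{3/4}n)$. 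Hence $\E[e^{tdnW}]\leq\exp\{t\log n+o(\log^{3/4}n)\}$, and since $\ell_0=(1+\sqrt\th)\log n/(dn)$ and $\sqrt\th\log n=\log^{3/4}n$, Markov's inequality gives for all $s\geq0$
\[
\Pr(W>\ell_0+s)\leq\exp\{-t\sqrt\th\log n+o(\log^{3/4}n)-tdns\}=\exp\{-(1-o(1))\log^{3/4}n\}\,e^{-tdns}.
\]
Integrating in $s$ yields $\E[(W-\ell_0)^+]\leq e^{-(1-o(1))\log^{3/4}n}/(tdn)=o(\log n/n)$, using $d\gg1/\log n$. For $R$, conditioning on $T_i$ gives $\E[Y_i\mathbf 1\{n\notin T_i\}]=\E[\mathbf 1\{n\notin T_i\}/\gamma_i]\leq\frac{3}{(n-i)dn}\Pr(n\notin T_i)$. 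Because $n$ has $\geq(1-\th)dn-|\bar T_{m-1}|\geq dn/2$ neighbours in $T_{m-1}$ once $n-m+1\leq k_1$, while $\gamma_{m-1}\leq 2(n-m+1)dn$, we get $\Pr(n\notin T_m\mid n\notin T_{m-1})\leq1-\frac{1}{4(n-m+1)}$, hence $\Pr(n\notin T_i)\leq C((n-i)/\th n)^{1/4}$; summing over $i>n-k_1$ gives $\E[R]\leq\frac{C'}{dn(\th n)^{1/4}}\sum_{k=1}^{k_1}k^{-3/4}=O(1/(dn))=o(\log n/n)$. Adding the two bounds proves the lemma.

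\textbf{Expected main difficulty.} The delicate point is the estimate for $W$: the ``room'' available in the exponent is only $\sqrt\th\log n=\log^{3/4}n$, which is sub-logarithmic, so one must check that \emph{every} lower-order contribution — the $O(\th)$ distortion of the $\gamma_i$, the $\approx1/\th$ size of the first factor, and the middle-range factors controlled by \eqref{0} and $\psi$ — is genuinely $o(\log^{3/4}n)$. This is precisely where the quantitative hypotheses $\th=\log^{-1/2}n$, $\psi=\omega\log\log n/\log^{1/2}n$ and $d\gg\log\log n/\log^{1/2}n$ are used.
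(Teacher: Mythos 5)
Your proof is correct in its essentials, but it takes a genuinely different route from the paper's. You reduce the lemma to the exact identity $\Pr(\cE)=\E[(e^{-\ell_0}-e^{-d'})^+]\leq\E[(d'-\ell_0)^+]$, where $d'$ is the $1$--$n$ distance in $G-e$, and then control $d'$ by running the Dijkstra process essentially to completion: a moment-generating-function (Chernoff) bound on the radius $W$ after $n-k_1$ steps, exploiting the slack $\sqrt{\th}\log n=\log^{3/4}n$ built into $\ell_0$, plus a separate expectation bound on the endgame contribution $R$. The paper instead conditions on $\ell(e)=\xi$, stops the exploration from vertex $1$ after only $n_0=n(1-d/2)$ steps, and uses the $\geq dn/2-1$ independent direct edges $f_m$ from $n$ into the explored set: given $d_{1,n_0}=x$, the probability that all of these are too long is at most $\min\{1,e^{-dn(\xi-x)/3}\}$, and $d_{1,n_0}$ is then controlled by a second-moment (Chebyshev) bound rather than an MGF. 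Your reduction is cleaner and dispenses with the $f_m$ device, at the price of needing the sharper concentration for $W$ (where, as you rightly flag, every error term must be $o(\log^{3/4}n)$) and the extra tail estimate for $R$; the paper gets away with Chebyshev because the many parallel edges $f_m$ absorb the last $dn/2$ steps of the process. Two small caveats: (i) your middle-range estimate $\gamma_i\geq(1-o(1))\psi i(n-i)$ invokes \eqref{0}, which is not assumed in the case $d>1/2$; there you should instead use the paper's bound $\gamma_i=\Omega(\e n\min\{i,n-i\})$ with $\e=d-\tfrac12$, which contributes the same $O(\log\log n)$ to the exponent, so nothing breaks. (ii) Accounting for edges inside $T_i$, the estimate $\gamma_i=idn(1+O(\th))$ for $i\leq k_1$ is really $idn(1+O(\th+\th/d))$; this matches what the paper itself asserts, so it is an inherited imprecision rather than a new gap in your argument.
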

\begin{proof}
  Without loss of generality we write $e=\{1,n\}$.  
If $\cE=\cE(e)$ occurs then we have the occurence of the event $\cF$ where 
\[
\cF=\set{d_{1,m}+\ell(f_m)\geq \ell(e),\,m=2,3,\ldots,n-1}
\]
and $f_m$ denotes the edge joining vertex $n$ to the vertex whose shortest distance from vertex 1 (in $G-\set{n}$) is the $m$th smallest. (If the edge does not exist then $\ell(f_m)=\infty$ in the calculation below.) Indeed this follows from Dijkstra's algorithm; the event $\cF$ indicates that at every step of the algorithm, no path shorter than the edge $\{1,n\}$ is found. 

Let $n_0=n(1-d/2)$. We need $\ell(f_m)+d_m\geq\xi=\ell(e)$ for all $m$ in order that $\cF$ occurs. If $d_{1,n_0}=x$ then this is implied by $\bigcap_{m=1}^{n_0}\set{\ell(f_m)\geq \xi-x}$. Using the independence of the $\ell(f_m)$ and $d_{1,i},i=2,\ldots,n_0$, we bound 
\begin{align}
\Pr(\cF\mid \max_{1,j}d_{1,j}\leq \ell_1)&\leq\frac{1}{\Pr(\max_{j}d_{1,j}\leq \ell_1)} \int_{\xi=\ell_0}^{\ell_1}e^{-\xi} \int_{x=0}^\infty\Pr\brac{\bigcap_{m=1}^{n_0}\set{\ell(f_m)\geq \xi-x}}d\Pr\set{d_{1,n_0}=x}d\xi\label{eq4}\\
\noalign{and using the fact that there are at least $dn/2-1$ indices $m$ for which $\ell(f_m)<\infty$ we bound}
\Pr(\cF\mid \max_{1,j}d_{1,j}\leq \ell_1)&\leq  (1+o(1))\int_{\xi=\ell_0}^{\ell_1}\int_{x=0}^\infty \min\set{1,e^{-dn(\xi-x)/3}} d\Pr\set{d_{1,n_0}=x}d\xi. \label{eq4a}
\end{align}
Now, if $\ell_2=\ell_0-\frac{(\log\log n)^2}{dn}$ then
\beq{eq5}{
\int_{\xi=\ell_0}^{\ell_1} \int_{x=0}^{\ell_2} \min\set{1,e^{-dn(\xi-x)/3}} d\Pr\brac{d_{1,n_0}=x}d\xi\leq \ell_1\exp\set{-\frac{(\log\log n)^2}{3}}=o\bfrac{\log n}{n}.
}

It remains to bound the same expression where the second integral goes from $x=\ell_2$ to $\infty$.

First consider the case where $d\leq 1/2$ and \eqref{0} holds. We have from \eqref{Sn2} that 
\begin{align}
\E(d_{1,n_0})&=S_{n_0}\leq (1+O(\th))\sum_{i=1}^{\th n}\frac{1}{dni}+\sum_{i=\th n+1}^{n_0}\frac{1}{\psi i(n-i)}\label{00}\\
&\leq \frac{(1+O(\th))\log n}{dn}+\frac{1}{\psi n}\sum_{i=\th n+1}^{n_0}\brac{\frac{1}{i}+\frac{1}{n-i}} =\frac{(1+O(\th))\log n}{dn}+O\bfrac{\log\log n}{\psi n} \nn\\
&=\frac{\log n}{dn}+O\bfrac{\log^{1/2}n}{n}<\ell_2-\frac{\sqrt{\th}}{2dn}\nn
\end{align}
and 
\beq{000}{
\Var(d_{1,n_0})\leq (1+O(\th))\sum_{i=1}^{\th n}\frac{1}{d^2n^2i^2}+\sum_{i=\th n+1}^{n_0}\frac{1}{\psi^2i^2(n-i)^2}\leq \frac{\p^2}{3d^2n^2}.
}
Chebychev's inequality then gives that
\[
\Pr(d_{1,n_0}\geq S_{n_0}+x)\leq \frac{\p^2}{3d^2x^2n^2}.
\]
As a consequence of this we see that
\beq{eq6}{
\int_{\xi=\ell_0}^{\ell_1} \int_{x=\ell_2}^\infty \min\set{1,e^{-dn(\xi-x)/3}} d\Pr\brac{d_{1,n_0}=x}d\xi\leq \frac{\ell_1\p^2}{3d^2(\ell_2-S_{n_0})^2n^2} \leq \frac{2\ell_1\p^2}{\th\log^2n} =O\bfrac{1}{n\log^{1/2}n}.
}
The lemma follows for $d\leq1/2$, from \eqref{eq5} and \eqref{eq6} and the Markov inequality.

When $d>1/2$ we can replace the second sum in \eqref{00} by 
\[
\sum_{i=\th n+1}^{n_0}\frac{1}{\e n\min\set{i,n-i}}=O\bfrac{1}{n\log n},\qquad\text{ where }\e=d-\frac12.
\]
By the same token, the second sum in \eqref{000} will be $o(n^{-2})$. The remainder of the proof will go as for the case $d\leq 1/2$.
\end{proof}
Together with Lemma \ref{maxdij}, Lemma \ref{lem5} implies that w.h.p. the number of edges $e$ for which $\cE(e)$ occurs is $o(n\log n)$. Adding these to $E_0$ gives us a 1-spanner of size $\approx \frac12n\log n$.
\subsection{Lower bound for part (b)}
\begin{lemma}\label{lem6}
Fix a set $A$ such that $|A|\leq a_0=O(\log n)$. Let $\cP$ be the event that there exists a path $P$  of length at most $\ell_4=\frac{\log n}{200dn}$ joining two distinct vertices of $A$. Then $\Pr(\cP)=O(n^{o(1)-199/200})$.  
\end{lemma}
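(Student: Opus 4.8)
The plan is to reuse the first--moment computation behind Lemma~\ref{lem1} almost verbatim, now taking a union bound over the $\binom{|A|}{2}=O(\log^2n)$ unordered pairs of distinct vertices of $A$ in addition to the path length. So I would first fix a pair $u,w\in A$. A path of $k$ edges from $u$ to $w$ is in particular a walk, and by the degree bound \eqref{degs} there are at most $((1+\th)dn)^{k-1}$ such walks. Exactly as in Lemma~\ref{lem1}, couple each edge length from below with an independent copy of $U_1$ (using $E(1)\succ U_1$); then the probability that a fixed $k$-edge walk has total length at most $\ell_4$ is at most $\ell_4^{k}/k!$. Hence
\[
\Pr(\exists\text{ a path of length }\le\ell_4\text{ between }u\text{ and }w)\le\sum_{k=1}^{n-1}((1+\th)dn)^{k-1}\frac{\ell_4^{k}}{k!}.
\]

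Next I would substitute $\ell_4=\frac{\log n}{200dn}$, so that $((1+\th)dn)^{k}\ell_4^{k}=C^{k}$ with $C=\frac{(1+\th)\log n}{200}$, and each summand is at most $\frac{1}{(1+\th)dn}\bfrac{eC}{k}^{k}$. The function $k\mapsto(eC/k)^{k}$ is maximized at $k=C$, where it equals $e^{C}=n^{(1+\th)/200}=n^{1/200+o(1)}$ (using $\th\to0$); and for $k\ge10\log n$ one has $eC/k\le e/10<1/3$, so the terms with $k\ge10\log n$ contribute $O(n^{-10})$ in total, just as in \eqref{eq1}. Summing the remaining $O(\log n)$ terms, each bounded by $\frac{1}{(1+\th)dn}\,n^{1/200+o(1)}$, and using $1/d=o\bfrac{\log^{1/2}n}{\log\log n}=n^{o(1)}$, I get
\[
\Pr(\exists\text{ a path of length }\le\ell_4\text{ between }u\text{ and }w)=n^{-1+o(1)}\cdot n^{1/200+o(1)}=n^{o(1)-199/200}.
\]
Finally, multiplying by the $\binom{|A|}{2}=O(\log^2n)=n^{o(1)}$ choices of $\{u,w\}$ yields $\Pr(\cP)=O(n^{o(1)-199/200})$, as claimed.

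I do not expect a serious obstacle here; this is a routine rerun of the union bound behind Lemma~\ref{lem1}, and the possible presence of other vertices of $A$ on the path is irrelevant since we only fix its two endpoints. The one point that needs a little care is the optimization over $k$: one must observe that the naive per-term bound $\bfrac{e(1+\th)\log n}{200k}^{k}$, when maximized over $k$, contributes only $n^{1/200+o(1)}$ --- not $n^{e/200+o(1)}$ --- because the stray factor $e$ is exactly absorbed at the optimal index $k\approx C$; this is precisely what produces the exponent $199/200$. One should also confirm, as above, that the $O(\log n)$ ``head'' terms and the geometrically small ``tail'' terms leave this exponent unchanged, and that the factor $1/d$ is only $n^{o(1)}$ under the hypothesis $d\gg\frac{\log\log n}{\log^{1/2}n}$.
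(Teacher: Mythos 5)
Your proof is correct and follows essentially the same route as the paper's: a union bound over the $O(\log^2 n)$ pairs of vertices of $A$ and over the path length $k$, counting at most $((1+\th)dn)^{k-1}$ walks, bounding the probability a fixed $k$-edge walk is short by $\ell_4^k/k!$ via the coupling $E(1)\succ U_1$, and optimizing $(eC/k)^k$ at $k\approx C$ to get the $n^{1/200+o(1)}$ factor. The only cosmetic differences are the truncation point of the sum ($10\log n$ versus $\log n$) and writing $\binom{a_0}{2}$ instead of $a_0^2$.
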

\begin{proof}
\mults{
\Pr(\cP)\leq a_0^2\sum_{k=0}^{n}((1+\th)dn)^k\frac{\ell_4^{k+1}}{k!}\leq  a_0^2\ell_4 \sum_{k=0}^{n} \bfrac{e^{1+\th}\log n}{200k}^k\leq \\
 a_0^2\ell_4\sum_{k=0}^{\log n}\bfrac{e^{1+\th}\log n}{200k}^k +O(n^{-2})\leq 2a_0^2\ell_4n^{(1+o(1))/200}=O(n^{o(1)-199/200}).
}
\end{proof}
\begin{lemma}\label{lem7}
Let $B_1$ denote the set of vertices whose incident edges of length smaller than $\ell_3=\ell_4/\l$ do not number in the range $I=\left[ \frac{\log n}{300d\l},\frac{\log n}{100d\l}\right]$. Then, w.h.p. $|B_1|\leq n^{1-1/5000\l}$. (Recall that we are bounding the size of a $\l$-spanner from below.)
\end{lemma}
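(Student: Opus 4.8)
The plan is to estimate $\Pr(v\in B_1)$ for a single vertex $v$ using the Chernoff bounds for $B(n,p)$ recalled above, and then to pass from this to a bound on $|B_1|$ via Markov's inequality.

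Fix $v\in[n]$ and let $N_v$ be the number of edges incident to $v$ of length less than $\ell_3$. Since the edge lengths are independent $E(1)$ random variables, $N_v$ has exactly the distribution $Bin(\deg(v),1-e^{-\ell_3})$. By \eqref{degs} we have $\deg(v)=(1\pm\th)dn$, and since $\ell_3=\frac{\log n}{200\l dn}=o(1)$ we have $1-e^{-\ell_3}=(1+o(1))\ell_3$; hence $\E N_v=(1+o(1))\deg(v)\ell_3=(1+o(1))\frac{\log n}{200\l}$, a quantity of order $\log n/\l$ lying strictly inside $I$ with a constant multiplicative gap to each endpoint. (The factors $(1\pm\th)$ and $(1+o(1))$ are negligible because $\th\to0$ and $\ell_3\to0$.)

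Next I would apply the two Chernoff bounds. For the left tail, choosing $\e$ a fixed constant bounded away from $0$ and $1$ so that the lower endpoint of $I$ equals $(1-\e)\E N_v$, one gets $\Pr\big(N_v<\tfrac{\log n}{300d\l}\big)\le\exp\{-\e^2\E N_v/2\}=n^{-c_1/\l}$; for the right tail, taking $\e$ a fixed constant (about $1$), $\Pr\big(N_v>\tfrac{\log n}{100d\l}\big)\le\exp\{-\e^2\E N_v/3\}=n^{-c_2/\l}$. Here $c_1,c_2$ are absolute constants, and inspection of the exponents gives $\min\{c_1,c_2\}>1/5000$ with room to spare. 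Hence $\Pr(v\in B_1)\le 2n^{-c/\l}$ for some constant $c>1/5000$, so $\E|B_1|=\sum_v\Pr(v\in B_1)\le 2n^{1-c/\l}=o\big(n^{1-1/(5000\l)}\big)$; the last step uses $c>1/5000$ together with $\l=O(1)$, which keeps $1/\l$ bounded below and hence $n^{-c/\l}$ genuinely polynomially small. Markov's inequality applied to the event $\{|B_1|\ge n^{1-1/(5000\l)}\}$ then completes the proof w.h.p.

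There is no serious obstacle: the only point needing care is bookkeeping the absolute constants in the Chernoff exponents so that they are provably larger than $1/5000$ once the harmless multiplicative errors are absorbed. It is worth recording that the hypothesis $2\le\l=O(1)$ enters twice in spirit — $\l\ge2$ guarantees that $I$ is a genuine (nonempty, bounded-below) window around $\E N_v$, while $\l=O(1)$ is exactly what lets the union bound over all $n$ vertices still leave only $o(n^{1-1/(5000\l)})$ bad vertices in expectation.
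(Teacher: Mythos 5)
Your argument is essentially the paper's proof: identify the count of edges at $v$ shorter than $\ell_3$ as a binomial with mean $(1+o(1))\frac{\log n}{200\l}$, apply the two-sided Chernoff bound with a constant relative deviation to get $\Pr(v\in B_1)\leq n^{-c/\l}$ for a constant $c$ strictly larger than $1/5000$ (the paper takes $\e=1/3$, giving exponent $\frac{1}{3600\l}$, rounded to $n^{-1/4000\l}$), and finish with Markov's inequality exactly as you do. One caveat applying equally to your write-up and to the paper's: the mean $\frac{\log n}{200\l}$ lies inside $I=\bigl[\frac{\log n}{300d\l},\frac{\log n}{100d\l}\bigr]$ with a constant multiplicative gap only if one reads the endpoints without the factor $d$ (as the paper's own Chernoff computation implicitly does); for $d<2/3$ the mean would fall below the stated left endpoint, so the $d$ in the definition of $I$ should be regarded as a typo rather than something your proof must accommodate.
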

\begin{proof}
The Chernoff bounds imply that 
\mults{
\Pr(v\in B_1)\leq \Pr\brac{Bin\brac{(1\pm\th)dn,1-\exp\set{-\frac{\log n}{200\l dn}}}\notin I }=\\ \Pr\brac{Bin\brac{(1\pm\th)dn,\frac{\log n}{200\l dn}+O\bfrac{\log^2n}{n^2}}\notin I }\leq 2\exp\set{-\frac{(1+o(1))\log n}{2\times 9\times 200\l}} \leq n^{-1/4000\l}.
}
The result follows from the Markov inequality.
\end{proof}
\begin{lemma}\label{lem7a}
Let $B_2$ denote the set of vertices $v$ for which $|\set{w:\ell_{v,w}\leq \ell_4}|\geq \log n$. Then $B_2=\emptyset$ w.h.p.
\end{lemma}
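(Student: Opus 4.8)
The plan is to run the same Chernoff-plus-union-bound argument used for Lemma~\ref{lem2}, now with a much larger deviation ratio. Fix a vertex $v$. By \eqref{degs} the vertex $v$ has at most $(1+\th)dn$ neighbours in $G$, and each incident edge independently has length at most $\ell_4$ with probability $1-e^{-\ell_4}\leq \ell_4=\frac{\log n}{200dn}$; since $Bin(N,p)$ is monotone in both arguments, the count $N_v:=|\set{w:\ell_{v,w}\leq \ell_4}|$ is stochastically dominated by $Bin\brac{(1+\th)dn,\ell_4}$, a binomial with mean $(1+\th)\tfrac{\log n}{200}=(1+o(1))\tfrac{\log n}{200}$.

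I would then apply the third Chernoff bound stated above, $\Pr(B(N,p)\geq \a Np)\leq (e/\a)^{\a Np}$, with $\a Np=\log n$, i.e.\ $\a=\tfrac{200}{1+\th}=(1-o(1))\,200$, to obtain
\[
\Pr(v\in B_2)\leq \Pr\brac{Bin\brac{(1+\th)dn,\ell_4}\geq \log n}\leq \bfrac{e(1+\th)}{200}^{\log n}=n^{-(\log 200-1)+o(1)}\leq n^{-4}
\]
for $n$ large, which is $o(n^{-1})$. A union bound over the $n$ choices of $v$ then gives $\Pr(B_2\neq\emptyset)\leq n\cdot o(n^{-1})=o(1)$, so $B_2=\emptyset$ w.h.p.

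There is no genuine obstacle here; the two points worth a line of care are that the stochastic domination is legitimate (it is, since \eqref{degs} caps $\deg_G(v)$ and the binomial is monotone in both parameters) and that the per-vertex exponent $\log 200-1$ exceeds $1$, so the bound comfortably beats $n^{-1}$ before the union bound is applied — both are immediate. Note that, as with the lower bound for part~(a), condition \eqref{0} is not used.
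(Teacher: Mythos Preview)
Your proof is correct and follows essentially the same approach as the paper: bound the per-vertex count by a binomial with mean $\approx \tfrac{\log n}{200}$, apply a Chernoff bound to get a per-vertex failure probability that is $o(n^{-1})$, and finish with a union bound over the $n$ vertices. Your write-up is in fact more explicit than the paper's one-line version, spelling out the stochastic domination and the specific Chernoff inequality used.
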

\begin{proof}
The Chernoff bounds imply that 
\[
\Pr(B_2\neq\emptyset)\leq n\Pr\brac{Bin\brac{(1\pm\th)dn,1-\exp\set{-\frac{\log n}{200dn}}}\geq \log n }=o(1).
\]
\end{proof}
Let $B_3$ denote the set of vertices $v$ for which there is a path of length at most $\ell_4$ joining neighbors $w_1,w_2$ such that $\ell_{v,w_i}\leq \ell_3,i=1,2$. Lemma \ref{lem6} with $A$ equal to the set of neighbors $w$ of vertex $v$ such that $\ell_{v,w}\leq \ell_3$ shows that $|B_3|=o(n)$ w.h.p. (The fact that we can take $|A|=O(\log n)$ follows from Lemma \ref{lem2}.) Lemmas \ref{lem7} and \ref{lem7a} then imply that if $v\notin B_1\cup B_3$ then a $\l$-spanner has to include the at least $\log n/(300d\l)$ edges incident to $v$ that are of length at most $\ell_3$. This completes the proof of part (b) of Theorem \ref{th1}.
\section{Summary and open questions}
We have determined the asymptotic size of the smallest 1-spanner when the edges of a dense (asymptotically) regular graph $G$ are given independent lengths distributed as $E_2$, modulo the truth of \eqref{0} or the degree being $dn,d>1/2$. 

There are a number of related questions one can tackle:
\begin{enumerate}
\item We could replace edge lengths by $E_2^s$ where $s<1$. This would allow us to generalise edge lengths to distributions with a density $f$ for which $f(x)\approx x^{1/s}$ as $x\to 0$. This is a more difficult case than $s=1$ and it was considered by Bahmidi and van der Hofstadt \cite{BH}. They prove that w.h.p. $d_{1,2}$ grows like $\frac{n^{s}}{\G(1+1/s)^s}$ where $\G$ denotes Euler's Gamma function.  The analysis is more complex than that of \cite{Jan} and it is not clear that our proof ideas can be generalised to handle this situation.
\item The results of Theorem \ref{th1} apply to $G_{n,p}$. It would be of some interest to consider other models of random or quasi-random graphs.
\end{enumerate}

\end{document}